\newtheorem{theorem}{Theorem}[section]
\newtheorem{prop}[theorem]{Proposition}
\newtheorem{lemma}[theorem]{Lemma}
\theoremstyle{definition}
\newtheorem{defn}[theorem]{Definition}
\newcommand{\Bad}{\mathrm{Bad}}
\newcommand{\Tad}{\mathrm{Bad}_{\theta}(j_1,\ldots,j_n)}
\newcommand{\mL}{\mathcal{L}}
\def\={\;=\;}
\def\>{\;>\;}
\def\<{\;<\;}
\def\:{\,:\,}
\def\.={\;\dot{=}\;}
\newcommand{\R}{\mathbb{R}}
\newcommand{\Z}{\mathbb{Z}}
\newcommand{\N}{\mathbb{N}}
\newcommand{\V}{\mathcal{V}}
\newcommand{\M}{\mathcal{M}}
\newcommand{\Mk}{\mathcal{M}_k}
\newcommand{\Qk}{\mathcal{Q}_k}
\newcommand{\G}{\mathcal{G}}
\newcommand{\F}{\mathcal{F}}
\begin{document}

\baselineskip=17pt

\title[Twisted badly approximable points] {Badly approximable points in twisted Diophantine approximation and Hausdorff dimension}

\author[P. Bengoechea]{Paloma BENGOECHEA$^\dag$}
\thanks{$^\dag$ Research supported by EPSRC Programme Grant: EP/J018260/1}
\address{Department of Mathematics, University of York, York, YO10 5DD, United Kingdom}
\email{paloma.bengoechea@york.ac.uk}

\author[N. Moshchevitin]{Nikolay MOSHCHEVITIN$^*$}
\thanks{$^*$ Research supported by RFBR grant No. 15-01-05700a}
\address{Department of Mathematics and Mechanics, Moscow State University, Leninskie Gory 1, GZ MGU, 119991 Moscow, Russia}
\email{moshchevitin@rambler.ru}

\date{}

\begin{abstract}For any $j_1,\ldots,j_n> 0$ with $\sum_{i=1}^nj_i=1$ and any $\theta\in\R^n$, let ${\mathrm{Bad}_{\theta}(j_1,\ldots,j_n)}$ denote the set of points $\eta\in\R^n$ for which $\max_{1\leq i\leq n}(\|q\theta_i-\eta_i\|^{1/j_i})>c/q$ for some positive constant $c=c(\eta)$ and all $q\in\N$. These sets are the `twisted' inhomogeneous analogue of $\mathrm{Bad}(j_1,\ldots,j_n)$ in the theory of simultaneous Diophantine approximation. It has been shown that they have full Hausdorff dimension in the non-weighted setting, i.e provided that $j_i=1/n$, and in the weighted setting when $\theta$ is chosen from $\mathrm{Bad}(j_1,\ldots,j_n)$. We generalise these results proving the full Hausdorff dimension in the weighted setting without any condition on $\theta$. Moreover, we prove $\dim(\Tad\cap\Bad(1,0,\ldots,0)\cap\ldots\cap\Bad(0,\ldots,0,1))=n$.
\end{abstract}

\subjclass[2010]{11K60,11J83,11J20}

\keywords{Badly approximable numbers, simultaneous twisted Diophantine approximation, Hausdorff dimension}

\maketitle

\section{Introduction}

The classical result due to Dirichlet: for any real number $\theta$ there exist infinitely many natural numbers $q$ such that
\begin{equation}\label{Dirichlet}
\|q\theta\|\leq q^{-1},
\end{equation}
where $\|\cdot\|$ denotes the distance to the nearest integer, has higher dimension generalisations. Consider any $n$-tuple of real numbers $(j_1,\ldots,j_n)$ such that
\begin{equation}\label{s,t}
j_1,\ldots,j_n> 0\quad\mbox{and}\quad \sum_{i=1}^n j_i=1.
\end{equation}
Then, for any vector $\theta=(\theta_1,\ldots,\theta_n)\in\R^n$, there exist infinitely many natural numbers $q$ such that
\begin{equation}\label{bua}
\max_{1\leq i\leq n}(\|q\theta_i\|^{1/{j_i}})\leq q^{-1}.
\end{equation}
The two results above motivate the study of real numbers and real vectors $\theta\in\R^n$ for which the right hand side of \eqref{Dirichlet} and \eqref{bua} respectively cannot be improved by an arbitrary constant. They respectively constitute the sets $\Bad$ of badly approximable numbers and $\Bad(j_1,\ldots,j_n)$ of $(j_1,\ldots,j_n)$-badly approximable numbers. 
Hence
$$
\Bad(j_1,\ldots,j_n):=\left\{(\theta_1,\ldots,\theta_n)\in\R^n\, :\, \inf_{q\in\N}\, \max_{1\leq i\leq n}(q^{j_i}\|q\theta_i\|)>0\right\}.
$$
In the 1-dimensional case, it is well known that the set of badly approximable numbers has Lebesgue measure zero but maximal Hausdorff dimension. In the $n$-dimensional case, it is also a classical result that $\Bad(j_1,\ldots,j_n)$ has Lebesgue measure zero, and Schmidt proved in 1966  that the particular set $\Bad(1/2,1/2)$ has full Hausdorff dimension. But the result of maximal dimension in the weigthed setting hasn't been proved until almost 40 years later, by Pollington and Velani \cite{PV}. In the 2-dimensional case, An showed in \cite{A} that $\Bad(j_1,j_2)$ is in fact winning for the now famous Schmidt games -see \cite{Schmidt}. Thus he provided a direct proof of a conjecture of Schmidt stating that any countable intersection of sets $\Bad(j_1,j_2)$ is non empty -see also \cite{BPV}. 

Recently, interest in the size of related sets, usually referred to as the `twists' of the sets $\Bad(j_1,\ldots,j_n)$, has developed. The study of these new sets started in the 1-dimensional setting:
we fix $\theta\in\R$  and consider the twist of $\Bad$:
$$
\Bad_\theta:=\left\{\eta\in\R:\, \inf_{q\in\N}q\|q\theta-\eta\|>0\right\}.
$$
The set $\Bad_\theta$ has a palpable interpretation in terms of rotations of the unit circle. Identifying the circle with the unit interval $[0,1)$, the value $q\theta$ (modulo 1) may be thought of as the position of the origin after $q$ rotations by the angle $\theta$.
If $\theta$ is rational, the rotation is periodic. If $\theta$ is irrational, a classical result of Weyl \cite{W} implies that $q\theta$ (modulo 1) is equidistributed, so $q\theta$ visits any fixed subinterval of $[0,1)$ infinitely often. 
The natural question of what happens if the subinterval is allowed to shrink with time arises. Shrinking a subinterval corresponds to making its length decay according to some specified function. The set $\Bad_\theta$ corresponds to considering, for any $\epsilon>0$, the shrinking interval $(\eta-\epsilon/q,\eta+\epsilon/q)$ centred at the point $\eta$ and where the specified function is $\epsilon/q$. Khintchine showed in \cite{Kin} that
\begin{equation}\label{Kin}
\|q\theta-\eta\|<\dfrac{1+\delta}{\sqrt{5}q}\qquad(\delta>0)
\end{equation}
is satisfied for infinitely many integers $q$, and Theorem III in Chapter III of Cassels' book \cite{Cassels} shows that the right hand side of \eqref{Kin} cannot be improved by an arbitrary constant for every irrational $\theta$ and every real $\eta$. This motivates the study of the set $\Bad_\theta$. Kim \cite{K} proved in 2007 that it has Lebesgue measure zero, and later it was shown by Tseng \cite{T} that it has full Hausdorff dimension (actually Tseng proved that $\Bad_\theta$ has the stronger property of being winning for any $\theta\in\R$).

By generalising circle rotations to rotations on torus of higher dimensions, i.e. by considering the sequence $q\theta$ (modulo 1) in $[0,1)^n$ where $\theta=(\theta_1,\ldots,\theta_n)\in\R^n$, we obtain the `twists' of the sets $\Bad(j_1,\ldots,j_n)$:
\begin{equation}
\Tad=\left\{(\eta_1,\ldots,\eta_n)\in\R^n:\, \inf_{q\in\N}\, \max_{1\leq i\leq n}(q^{j_i}\|q\theta_i-\eta_i\|)>0\right\}.
\end{equation}

In \cite{B} Bugeaud et al proved that the non-weighted set $\Bad_\theta(1/n,\ldots\\\ldots,1/n)$ has full Hausdorff dimension. Recently, Einsiedler and Tseng \cite{ET} extended the results \cite{B} and \cite{T} by showing, among other results, that $\Bad_\theta(1/n,\ldots,1/n)$ is also winning. 
It was shown in \cite{Mr1} that such results may be obtained by classical methods developed by
Khintchine \cite{Kin1} and Jarn\'{\i}k \cite{Jr1,Jr2} and discussed in Chapter V of Cassels' book \cite{Cassels}.
Unfortunately, these methods cannot be directly extended to the weighted setting. For the weighted setting, less has heretofore been known.
Harrap did the first contribution \cite{H} in the 2-dimensional case, by proving that $\Bad_\theta(j_1,j_2)$ has full Hausdorff dimension provided that the fixed point $\theta\in\R^2$ belongs to $\Bad(j_1,j_2)$, which is a significantly restrictive condition.
Recently,  under the hypothesis $\theta\in\Bad(j_1,\ldots,j_n)$, Harrap and Moshchevitin have extended to weighted linear forms in higher dimension and improved to winning the result in \cite{H} (see \cite{HM}).

In this paper, we prove that the weighted set $\Tad$ has full Hausdorff dimension for any $\theta\in\R^n$. Moreover, the following theorem holds. 

\begin{theorem}\label{teorema}
For any $\theta\in\R^n$ and all $j_1,\ldots,j_n> 0$ with $\sum_{i=1}^nj_i=1$,
$$
\dim(\Tad\cap\mathrm{Bad}(1,0,\ldots,0)\cap\ldots\cap\mathrm{Bad}(0,\ldots,0,1))=n.
$$
\end{theorem}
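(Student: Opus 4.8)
The plan is to construct a Cantor-type subset of $\Tad\cap\mathrm{Bad}(1,0,\ldots,0)\cap\cdots\cap\mathrm{Bad}(0,\ldots,0,1)$ of full Hausdorff dimension $n$, by intersecting two families of conditions that are, individually, of full dimension and that are "compatible" at the level of the construction. Note first that $\mathrm{Bad}(1,0,\ldots,0)$ as a subset of $\R^n$ is simply $\Bad\times\R^{n-1}$ (the first coordinate is badly approximable, the others unrestricted), and similarly $\mathrm{Bad}(0,\ldots,0,1)=\R^{n-1}\times\Bad$, so the intersection $\bigcap_{i=1}^n\mathrm{Bad}(0,\ldots,1,\ldots,0)$ is exactly $\Bad^n$, the set of vectors all of whose coordinates are badly approximable numbers. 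Thus the statement to be proved is
$$
\dim\bigl(\Tad\cap\Bad^n\bigr)=n.
$$
Since $\Bad^n$ is a product of one-dimensional $\Bad$-sets, its "slices" in each coordinate direction have full dimension $1$; the key point is that membership in $\Tad$ is a genuinely $n$-dimensional weighted condition that must be arranged simultaneously.

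The first step is to recall the one-dimensional toolkit. For a fixed coordinate, the set of badly approximable reals is obtained by the classical Jarník/continued-fraction Cantor construction: at level $k$ one has a collection of intervals of comparable length $\asymp Q^{-2k}$ (for a suitable parameter $Q$), each parent interval containing $\gg Q$ children, and one discards only a bounded number of children at each step to avoid rationals with small denominator. This gives $\dim\Bad=1$, and more importantly it gives a construction with a lot of room: at each level one may additionally throw away a positive \emph{proportion} bounded away from $1$ of the surviving intervals and still retain full dimension in the limit. The second step is the analogous construction for $\Bad_\theta$ in one variable (following Tseng, or the classical approach of \cite{Mr1} via Khintchine and Jarník): here the $q$-th forbidden region around $\eta$ is the interval of length $\asymp 1/q$ about the residue of $q\theta$ modulo $1$, and for each scale only $O(1)$ of the currently-surviving intervals at that scale need be removed to kill the inequality $\|q\theta-\eta\|<c/q$. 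The synthesis of these two one-dimensional pictures—removing both the $\Bad$-rationals and the $\Bad_\theta$-rotation intervals at each level—still leaves full dimension, because the two removal rules each cost only a bounded number of children per parent.

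The third and central step is to promote this to the $n$-dimensional weighted setting. I would build a nested sequence of families $\mathcal{G}_k$ of axis-parallel boxes in $\R^n$, where the box at level $k$ has its $i$-th side of length $\asymp N_k^{-j_i}$ for a rapidly increasing sequence $N_k$; the exponents $j_i$ are precisely what makes the weighted Diophantine condition $\max_i(q^{j_i}\|q\theta_i-\eta_i\|)>c$ homogeneous with respect to the scaling $q\mapsto N_k$. Each box $B$ of level $k$ is subdivided into $\asymp N_{k+1}/N_k$ subdivisions in each coordinate direction, i.e.\ $\asymp (N_{k+1}/N_k)^{\sum j_i}=(N_{k+1}/N_k)$ — wait, more precisely into a product grid of $\asymp (N_{k+1}/N_k)^{j_i}$ pieces in direction $i$ — and one then prunes: (a) in coordinate $i$, remove the $O(1)$ columns of subboxes whose $i$-th coordinate would fail the $\Bad$-condition for the denominators $q$ in the window $(N_k,N_{k+1}]$; and (b) remove the $O(1)$ subboxes hit by the "twisted" forbidden region $\{\eta:\|q\theta_i-\eta_i\|<c N_k^{-j_i}\ \text{for all}\ i\}$ for each such $q$. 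Because a single $q$ constrains each coordinate independently and the removed region is a box of sidelengths $\asymp c/q$ in the $1/j_i$-scaled sense, only a bounded-in-$k$ number of level-$(k+1)$ subboxes is destroyed per level-$k$ parent, provided $N_{k+1}/N_k$ is chosen large enough (depending on $c$, which we are free to take small). A standard mass-distribution / Cantor-set dimension estimate (the higher-dimensional analogue of the lemma in \cite{PV}, or the box-counting argument of \cite{Mr1}) then yields that the limiting set has Hausdorff dimension $\sum_{i=1}^n j_i\cdot\frac{\log(N_{k+1}/N_k)^{j_i}}{\ldots}$, which tends to $n$ as the pruning proportion tends to $0$.

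The main obstacle I anticipate is step (b) in the non-weighted-$\theta$ generality: when $\theta$ is \emph{not} assumed badly approximable, the residues $\{q\theta_i \bmod 1 : N_k<q\le N_{k+1}\}$ can cluster, so naively several of them might fall into the same level-$k$ parent box and each demand its own removal, inflating the count beyond $O(1)$ per parent. Harrap's argument in \cite{H} avoids this precisely by invoking $\theta\in\Bad(j_1,\ldots,j_n)$, which spreads those residues out. To dispense with that hypothesis one must instead partition the denominator range $(N_k,N_{k+1}]$ more cleverly — e.g.\ group the $q$'s according to the scale at which $q\theta$ (in the relevant weighted sense) is well- or badly-approximable, and use a pigeonhole/counting argument showing that within any dyadic block the number of $q$ whose forbidden box meets a fixed parent is still bounded — so that the total damage summed over the block remains a bounded fraction of the $N_{k+1}/N_k$ available children. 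This is the heart of the matter; once it is in place, intersecting with $\Bad^n$ is harmless since the $\Bad$-removals act coordinate-wise and commute with everything above, and the full-dimension conclusion follows from the Cantor-construction dimension lemma with the pruning proportion driven to zero by taking $c\to 0$ and $N_{k+1}/N_k\to\infty$.
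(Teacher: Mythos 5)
Your overall architecture (identifying the intersection of the weight-degenerate sets with $\Bad^n$, building a Cantor set of boxes with sides $\asymp R^{-kj_i}$, pruning, and applying the mass distribution principle) matches the paper's, and you have correctly located the crux: when $\theta$ is not assumed badly approximable, the points $q\theta \bmod 1$ with $q$ in a given denominator window can cluster inside a single parent box, so removing a box around each such point need not cost only $O(1)$ children per parent. But your proposed fix --- regrouping the $q$'s "according to the scale at which $q\theta$ is well- or badly-approximable" and invoking an unspecified pigeonhole bound --- is a hope rather than an argument, and it sits exactly at the point where the existing proofs (Harrap, Harrap--Moshchevitin) need the hypothesis $\theta\in\Bad(j_1,\ldots,j_n)$ that the theorem is supposed to dispense with. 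As written, your step (b) is a genuine gap.

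The paper closes this gap with an idea absent from your proposal: a transference to the \emph{best approximation vectors} of $\theta$. One defines the sequence $m_\nu\in\Z^n$ of weighted best approximations, with heights $M_\nu=\max_i|m_{\nu,i}|^{1/j_i}$ and errors $\zeta_\nu=\|m_\nu\cdot\theta\|$, and proves (the Proposition of Section 3) that $\inf_\nu\|m_\nu\cdot\eta\|>0$ already forces $\eta\in\Tad$: given $q$, Minkowski's bound $\zeta_\nu M_{\nu+1}\le 1$ lets one choose $\nu$ with $\gamma/(2\zeta_\nu)\le q\le\gamma/(2\zeta_{\nu+1})$ and convert the lower bound on $\|m_{\nu+1}\cdot\eta\|$ into the twisted inequality for that $q$. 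This replaces the badly distributed point targets $q\theta$ by the hyperplane slabs $\{\eta:|m_\nu\cdot\eta+p|<\epsilon\}$, whose number per scale is governed not by any Diophantine property of $\theta$ but by the \emph{lacunarity} of the heights ($M_{\nu+2\cdot3^n}\ge 2M_\nu$, proved by counting lattice points in translates of a convex body): only $O(\log R)$ best approximations have height in $[R^{k-1},R^k)$, each contributes $O(1)$ slabs meeting a given parent box, and each slab kills $O(R^{1-j_{\min}})$ of the $\asymp R$ children. That dual mechanism is what makes the construction work for arbitrary $\theta$; without it, or an equivalent substitute, your construction does not go through. The $\Bad^n$ part of your proposal (coordinate-wise removal of rational neighbourhoods) is fine and matches the paper.
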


The same type of theorem holds in the classical not twisted setting; it constitutes the work done in \cite{PV} (see Theorem 2).
\\

Note that if $1,\theta_1,\ldots,\theta_n$ are linearly dependent over $\Z$, then Theorem \ref{teorema} is obvious. Indeed, in this case $\left\{q\theta:\, q\in\Z\right\}$ is restricted to  a hyperplane $H$ of $\R^n$, so $\Tad\supset\R^n\backslash H$ is winning. Hence $\Tad\cap\mathrm{Bad}(1,0,\ldots,0)\cap\ldots\cap\mathrm{Bad}(0,\ldots,0,1)$ is winning and in particular has full dimension \footnote{We recall that winning sets in $\R^n$ have maximal Hausdorff dimension, and that countable intersections of winning sets are again winning. We refer the reader to \cite{Schmidt} for all necessary definitions and results on winning sets.}. Therefore we suppose throughout the paper that $1,\theta_1,\ldots,\theta_n$ are linearly independent over $\Z$.

The strategy for the proof of Theorem \ref{teorema} is as follows. We start by defining a set $\V\subset\Tad$ related to the best approximations to the fixed point $\theta\in\R^n$. 
Then we construct a Cantor-type set $K(R)$ inside $\V\cap\Bad(1,0,\ldots,0)\cap\ldots\cap\Bad(0,\ldots,0,1)$. Finally we describe a probability measure supported on $K(R)$ to which we can apply the mass distribution principle and thus find a lower bound for
 the dimension of $K(R)$. 

Best approximations are defined in Section 2. In Section 3 we define $\V$ and give the proof of the inclusion $\V\subset\Tad$. We construct $K(R)$ in Section 4 and describe the probability measure in Section 5. Finally we compute the lower bound for the dimension of $K(R)$ in Section 6.
\\

In the following, we let $n\in\N$, fix an $n$-tuple $(j_1,\ldots,j_n)\in\R^n$ satisfying \eqref{s,t} and a vector $\theta=(\theta_1,\ldots,\theta_n)\in\R^n$ such that $1,\theta_1,\ldots,\theta_n$ are linearly independent over $\Z$. We denote by $x\cdot y$ the scalar product of two vectors $x$ and $y$ in $\R^n$, and by $\|\cdot\|$ the distance to the nearest integer.

\section{Best approximations}

\begin{defn}\label{def ba}
An n-dimensional vector $m=(m_1,\ldots,m_n)\in\Z^n\backslash\left\{0\right\}$ is called a best approximation to $\theta$ if for all $v\in\Z^n\backslash\left\{0,-m,m\right\}$ the following implication holds:
$$
\max_{1\leq i\leq n}(|v_i|^{1/j_i})\leq\max_{1\leq i\leq n}(|m_i|^{1/j_i})\Longrightarrow 
\|v\cdot\theta\|>\|m\cdot\theta\|.
$$
\end{defn}

Note that the condition $1,\theta_1,\ldots,\theta_n$ are $\Z$-linearly independent allows us to demand a strict inequality in the right hand side of the implication above.

Note also that when $n=1$ the best approximations to a real number $x$ are, up to the sign, the denominators of the convergents to $x$.

Since $1,\theta_1,\ldots, \theta_n$ are $\Z$-linearly independent, we have an infinite number of best approximations to $\theta$. They can be arranged up to the sign -so that two vectors of opposite sign do not both appear- in an infinite sequence
\begin{equation}\label{sequence}                                                                        
m_\nu=(m_{\nu,1},\ldots,m_{\nu,n})\qquad\nu\geq 1,
\end{equation}
such that the values
\begin{equation}\label{M}
M_\nu=\max_{1\leq i\leq n}(|m_{\nu,i}|^{1/j_i})
\end{equation}
form a strictly increasing sequence, and the values 
\begin{equation}\label{zeta}
\zeta_\nu=\|m_\nu\cdot\theta\|
\end{equation}                                                                    
form a strictly decreasing sequence. Hence each value $M_\nu$ corresponds to a single best approximation $m_\nu$. The quantity $M_\nu$ can be referred to as the `height' of $m_\nu$.

Best approximations vectors have often been used in proofs, but not always explicitly. In particular, Voronoi  \cite{Vor} selected some points in
a lattice that correspond exactly to the best approximation vectors (see  also \cite{DelFad}).  Similar constructions were introduced in \cite{L} or Section 2 of \cite{BL}. Some important properties of the best approximation vectors are discussed in \cite{Mba,Mhin} and a recent survey on the topic is due to Chevallier \cite{Nicolas}. 
\\

For each $\nu\geq 1$, it is easy to see that the region 
$$
\left\{(x_0,\ldots,x_n)\in\R^{n+1}\, :\, \max_{1\leq i\leq n}(|x_i|^{1/j_i})<M_{\nu+1},\,\, \Big|x_0+\sum_{i=1}^n x_i\theta_i\Big|<\zeta_\nu\right\}
$$
does not contain any integer point different from $0$. Since this region has volume $2^{n+1}M_{\nu+1}\zeta_\nu$ (see Lemma 4 in Appendix B of \cite{Cassels}), it follows from Minkowski's convex body theorem that
\begin{equation}\label{Minko}
\zeta_\nu M_{\nu+1}\leq 1.
\end{equation}
The inequality above will be used later as well as the following lemma, stating that the sequence of heights $M_\nu$ is lacunary.

\begin{lemma}\label{lema lacunary}
For every $\nu\geq 1$, we have
$$
M_{\nu+2\cdot 3^n}\geq 2M_{\nu}.
$$
\end{lemma}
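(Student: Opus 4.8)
The plan is to exploit the Minkowski-type inequality $\zeta_\nu M_{\nu+1}\leq 1$ together with a counting argument that bounds how many best approximations can have height within a bounded ratio of each other. First I would record the companion lower bound: since the lattice $\Z^{n+1}$ has covolume $1$ and the region defining $M_\nu,\zeta_\nu$ is a symmetric convex body of volume $2^{n+1}M_{\nu+1}\zeta_\nu$, Minkowski's second theorem (or a direct transference argument using that $m_\nu$ itself lies essentially on the boundary) gives a bound of the shape $\zeta_{\nu-1}M_\nu\gg 1$, i.e. $\zeta_\nu M_{\nu}$ does not decay too fast. More usefully, I would argue directly: fix $\nu$ and suppose toward a contradiction that $M_{\nu+2\cdot 3^n}< 2M_\nu$. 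Then all of the best approximation vectors $m_\nu,m_{\nu+1},\ldots,m_{\nu+2\cdot 3^n}$ have heights in the interval $[M_\nu,2M_\nu)$, hence each $m_{\nu+k}$ lies in the box $\{x\in\R^n:\ |x_i|^{1/j_i}<2M_\nu\}=\{x:\ |x_i|<(2M_\nu)^{j_i}\}$.

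The key step is then a pigeonhole/covering argument inside this box. I would partition the box $\prod_{i=1}^n(-(2M_\nu)^{j_i},(2M_\nu)^{j_i})$ into sub-boxes of the form $\prod_{i=1}^n$ of intervals of length $(2M_\nu)^{j_i}/2^{\,?}$, chosen so that two integer points in the same sub-box differ by a vector $v=m_{\nu+k}-m_{\nu+\ell}$ whose height satisfies $\max_i(|v_i|^{1/j_i})\leq M_\nu$, hence no larger than $M_{\nu+k}$ (the larger index). The natural choice is to cut each coordinate into $3$ equal pieces but shifted so the half-width in coordinate $i$ is $(2M_\nu)^{j_i}/3 \le M_\nu^{j_i}$ after using $2^{j_i}\le 2$; one checks $\prod 3 = 3^n$ boxes suffice and a difference of two points in one box has $i$-th coordinate bounded by $(2M_\nu)^{j_i}\cdot\frac{2}{3}$, which is $\le M_\nu^{j_i}$ once the constants are tuned — this is the routine inequality-chasing I will not grind through here, but the point is that $3^n$ cells is the right order. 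Having $2\cdot 3^n+1 > 2\cdot 3^n$ vectors distributed among $3^n$ cells forces some cell to contain three of them, say $m_{\nu+k_1},m_{\nu+k_2},m_{\nu+k_3}$ with $k_1<k_2<k_3$; then both differences $v=m_{\nu+k_3}-m_{\nu+k_1}$ and $v'=m_{\nu+k_2}-m_{\nu+k_1}$ are nonzero (distinct best approximations, even up to sign, by the arrangement in \eqref{sequence}) and have height $\le M_{\nu+k_3}$, while $v\ne \pm m_{\nu+k_3}$, so Definition \ref{def ba} applied to the best approximation $m_{\nu+k_3}$ yields $\|v\cdot\theta\|>\|m_{\nu+k_3}\cdot\theta\|=\zeta_{\nu+k_3}$; but simultaneously $\|v\cdot\theta\|\le\|m_{\nu+k_1}\cdot\theta\|+\|m_{\nu+k_3}\cdot\theta\|\le 2\zeta_{\nu+k_1}\le 2\zeta_\nu$, and combining these across the three available difference vectors (which pairwise cannot all be $\pm$ each other) squeezes $\zeta$ too hard — more precisely, having three points in one cell produces two independent ``small'' difference vectors both of which must, by the best-approximation property, have $\|\cdot\theta\|$ exceeding the corresponding $\zeta$, contradicting that their heights are too small. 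I would clean this up by choosing the constant $2$ in the statement and the factor $2\cdot 3^n$ in the index precisely so that three-in-a-cell is unavoidable and yields a contradiction with Definition \ref{def ba}.

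I expect the main obstacle to be the bookkeeping in the covering step: one must choose the cell side-lengths so that (i) the whole box of possible heights $[M_\nu,2M_\nu)$ is covered by at most $3^n$ cells, and (ii) any difference of two lattice points inside one cell has height strictly below the height of the later best approximation, so that Definition \ref{def ba} applies and forces $\|v\cdot\theta\|>\zeta_{\nu+k}$ while the triangle inequality forces $\|v\cdot\theta\|\le 2\zeta_\nu<\zeta_{\nu+k}$ once we also use $\zeta_\nu<\zeta_{\nu-1}$ is strictly decreasing and $\zeta_\nu M_{\nu+1}\le 1$ — the genuinely delicate point is getting the strict inequalities and the exclusion $v\ne\pm m_{\nu+k}$ to line up, which is exactly why one needs \emph{two} independent difference vectors (hence three points in a cell, hence the factor $2\cdot 3^n$ rather than $3^n$) rather than one. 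Everything else is elementary: the $\zeta_\nu$ are strictly decreasing, the $M_\nu$ strictly increasing, and \eqref{Minko} provides the only quantitative input needed beyond the definition of best approximation.
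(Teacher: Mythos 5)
Your overall strategy --- bound the number of best approximations with height in $[M_\nu,2M_\nu)$ by a covering-plus-pigeonhole argument --- is of the same family as the paper's, but the two steps you defer as ``routine'' are exactly where your version breaks down. First, the cell count. For a difference of two lattice points in the same cell to have height $\le M_\nu$ you need each cell to have diameter at most $M_\nu^{j_i}$ in the $i$-th coordinate, while the box you must cover has side $2(2M_\nu)^{j_i}=2^{1+j_i}M_\nu^{j_i}$; this forces $\lceil 2^{1+j_i}\rceil$ cells in coordinate $i$, which is $4$ (not $3$) whenever $j_i>\log_2 3-1\approx 0.585$. So in general you need up to $4^n$ cells, and then $2\cdot3^n+1$ points no longer force three into one cell: for $n=2$ and $(j_1,j_2)=(0.9,0.1)$ you get $12$ cells and $19$ points, hence only two per cell guaranteed. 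The paper avoids this entirely by working in $\R^{n+1}$ and covering the doubled region by $3^n$ translates of the full symmetric convex body $\left\{\max_i|x_i|^{1/j_i}\le M_\nu,\ \big|x_0+\sum_i x_i\theta_i\big|\le\zeta_\nu\right\}$; these translates have width $2M_\nu^{j_i}$, twice what your difference argument can tolerate, and the counting is done via the fact that a translate of a symmetric convex set containing exactly three lattice points contains at most four, with no difference-vector argument at all.

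Second, and more seriously, your contradiction does not close. From $v=m_{\nu+k_3}-m_{\nu+k_1}$ of height $\le M_\nu$ you correctly get $\|v\cdot\theta\|>\zeta_{\nu+k_3}$ from Definition \ref{def ba}, and the triangle inequality gives $\|v\cdot\theta\|\le\zeta_{\nu+k_1}+\zeta_{\nu+k_3}\le 2\zeta_\nu$; but your claimed inequality $2\zeta_\nu<\zeta_{\nu+k}$ is false (the $\zeta$'s decrease, so $\zeta_{\nu+k}\le\zeta_\nu<2\zeta_\nu$), and the two bounds $\zeta_{\nu+k_3}<\|v\cdot\theta\|\le 2\zeta_{\nu+k_1}$ are perfectly compatible, so no amount of combining the three differences ``squeezes'' anything. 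What actually produces a contradiction is near-cancellation of fractional parts: writing $m_{\nu+k}\cdot\theta=a_k+\delta_k$ with $a_k\in\Z$ and $|\delta_k|=\zeta_{\nu+k}$, the difference $m_{\nu+k_i}-m_{\nu+k_j}$ satisfies $\|(m_{\nu+k_i}-m_{\nu+k_j})\cdot\theta\|\le|\delta_{k_i}-\delta_{k_j}|<\zeta_{\nu+\min(k_i,k_j)}$ \emph{only when $\delta_{k_i}$ and $\delta_{k_j}$ have the same sign}, and that strict inequality is what violates Definition \ref{def ba} applied to the earlier best approximation. So you need an additional pigeonhole on the sign of $\delta$ --- this is the real reason three points per cell are needed rather than two. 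With that step made explicit, your construction proves the lemma with $2\cdot4^n$ in place of $2\cdot3^n$ (which would still suffice for Fact~1 later in the paper), but it does not prove the statement with the constant as written.
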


\begin{proof} Given $\nu\geq 1$, we show that we have at most $2\cdot 3^n$ vectors $m_{\nu+r}$ with $r\geq 0$ and $M_{\nu+r}< 2M_\nu$.
The goal is to see that the $0$-symmetric region
\begin{equation}\label{double}
\left\{(x_0,\ldots,x_n)\in\R^{n+1}:\, \max_{1\leq i\leq n}(|x_i|^{1/j_i})< 2M_\nu,\,\, \Big|x_0+\sum_{i=1}^n x_i\theta_i\Big|\leq\zeta_\nu\right\}
\end{equation} 
contains at most $4\cdot3^n$ integer points other than $0$. The region \eqref{double} is covered by sets of the form 
$$
T(\xi)=\left\{\begin{array}{rl}(x_0,\ldots,x_n)\in\R^{n+1}&:\, \max_{1\leq i\leq n}(|x_i-\xi_i|^{1/j_i})\leq M_\nu,\\
 &\mbox{and } \Big|x_0-\xi_0+\sum_{i=1}^n(x_i-\xi_i)\theta_i\Big|\leq\zeta_\nu\end{array}\right\},
$$
with
\begin{equation}\label{xi}
\xi_i\in\left\{-2M_\nu^{j_i},0,2M_\nu^{j_i}\right\}, \qquad \xi_0=-\sum_{i=1}^n\xi_i\theta_i.
\end{equation}
Each region $T(\xi)$ is the translate by $(\xi_0,\ldots,\xi_n)$ of the set 
$$
\left\{(x_0,\ldots,x_n)\in\R^{n+1}:\, \max_{1\leq i\leq n}(|x_i|^{1/j_i})\leq M_\nu,\,\, \Big|x_0+\sum_{i=1}^n x_i\theta_i\Big|\leq\zeta_\nu\right\},
$$
which contains exactly three integer points: $0$ and two best approximations with opposite sign. Hence each $T(\xi)$ contains at most four integer points. Since there are $3^n$ possible choices for $(\xi_0,\ldots,\xi_n)$ satisfying \eqref{xi},
 the set \eqref{double} contains at most $4\cdot 3^n$ integer points.
\end{proof}

\section{The set $\V$ included in $\Tad$}

The following proposition allows us to work with a set defined by the best approximations to $\theta$ instead of working directly with $\Tad$. 

\begin{prop}\label{prop} If $\eta\in\R^n$ satisfies
\begin{equation}\label{Argu Mosche}
\inf_{\nu}\|m_\nu\cdot\eta\|>0,
\end{equation}
then $\eta\in\Tad$.
\end{prop}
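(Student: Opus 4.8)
I would prove this by contradiction. Put $2\epsilon:=\inf_{\nu}\|m_\nu\cdot\eta\|$, assume $\epsilon>0$, and suppose toward a contradiction that $\eta\notin\Tad$, i.e. $\inf_{q\in\N}\max_{1\le i\le n}(q^{j_i}\|q\theta_i-\eta_i\|)=0$. First I would dispose of two degeneracies. Observe that $\eta\ne q\theta$ for every $q\in\N$, since $\eta=q_0\theta$ would give $\|m_\nu\cdot\eta\|=\|q_0(m_\nu\cdot\theta)\|\le q_0\,\zeta_\nu\to0$ (using \eqref{Minko} and $M_\nu\to\infty$), contradicting $\epsilon>0$. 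Consequently each number $\max_i(q^{j_i}\|q\theta_i-\eta_i\|)$ is strictly positive, so the vanishing of the infimum over $q$ produces a sequence $q_k$ along which this quantity tends to $0$; such a sequence is unbounded, because a bounded one would take some value infinitely often and that value would be a fixed positive number. After passing to a subsequence I may therefore assume $q_k\to\infty$, so it remains only to derive a contradiction from the existence of arbitrarily large $q$ for which $\delta:=\max_i(q^{j_i}\|q\theta_i-\eta_i\|)$ is arbitrarily small.

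The engine of the argument is a single inequality, valid for every index $\nu$ and every such $q$. For each $i$ I would let $p_i\in\Z$ be the integer nearest to $q\theta_i-\eta_i$, so that $\eta_i=q\theta_i-p_i-r_i$ with $|r_i|=\|q\theta_i-\eta_i\|\le\delta q^{-j_i}$, and write $m_\nu\cdot\theta=a_\nu\pm\zeta_\nu$ with $a_\nu\in\Z$. Expanding $m_\nu\cdot\eta=q\,(m_\nu\cdot\theta)-m_\nu\cdot p-\sum_i m_{\nu,i}r_i$ and discarding the integer part $qa_\nu-m_\nu\cdot p$ gives
\[
\|m_\nu\cdot\eta\|\ \le\ q\,\zeta_\nu\ +\ \sum_{i=1}^{n}|m_{\nu,i}|\,|r_i|\ \le\ q\,\zeta_\nu\ +\ \delta\sum_{i=1}^{n}\Big(\frac{M_\nu}{q}\Big)^{j_i},
\]
where the last step uses $|m_{\nu,i}|\le M_\nu^{j_i}$, immediate from \eqref{M}.

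It then remains to pick $\nu$ so that both terms on the right are small, and this balancing is the step I expect to be the only real obstacle. The naive choice ``$M_\nu\le q<M_{\nu+1}$'' makes the error term $\le n\delta$ but only yields $q\zeta_\nu\le q/M_{\nu+1}<1$, which is worthless. Instead I would take $\nu$ to be the \emph{largest} index with $M_\nu<q/\epsilon$; this is well defined and $\ge1$ because $M_1\le q<q/\epsilon$ (note $\epsilon\le1/4$) and $M_\nu\to\infty$ by Lemma \ref{lema lacunary}. By maximality $M_{\nu+1}\ge q/\epsilon$, so \eqref{Minko} gives $q\zeta_\nu\le q/M_{\nu+1}\le\epsilon$; and since $M_\nu/q<1/\epsilon$ while each $j_i\in(0,1]$, every summand satisfies $(M_\nu/q)^{j_i}\le1/\epsilon$, whence the error term is $\le n\delta/\epsilon$. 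Thus $\|m_\nu\cdot\eta\|\le\epsilon+n\delta/\epsilon$, and choosing $q=q_k$ far enough out that $\delta<\epsilon^2/n$ forces $\|m_\nu\cdot\eta\|<2\epsilon$, contradicting the definition of $\epsilon$. The moral is that one must search among the best approximations up to height $q/\epsilon$, not merely up to $q$: this costs only the factor $1/\epsilon$ in the error term, which is harmless since $\delta$ is free to be taken below $\epsilon^2/n$, and the sole external ingredient is Minkowski's inequality \eqref{Minko}.
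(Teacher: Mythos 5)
Your proof is correct and follows essentially the same route as the paper's: the same expansion $m_\nu\cdot\eta = q\,(m_\nu\cdot\theta)-m_\nu\cdot p-\sum_i m_{\nu,i}r_i$, the same coefficient bound $|m_{\nu,i}|\le M_\nu^{j_i}$, and the same appeal to \eqref{Minko} after a pigeonhole choice of $\nu$ adapted to $q$. The paper argues directly (sandwiching $q$ between $\gamma/(2\zeta_\nu)$ and $\gamma/(2\zeta_{\nu+1})$ and extracting the explicit lower bound $\gamma c/(2n)$) rather than by contradiction, but that is only a repackaging of your argument.
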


\begin{proof} Let $\eta=(\eta_1,\ldots,\eta_n)\in\R^n$ satisfy
$$
\| m_\nu\cdot\eta\|>\gamma\qquad \forall\nu\geq 1
$$
for some $\gamma>0$. 
For all  $q\in\N$ and $\nu\geq 1$, we have the identity
$$
 m_\nu\cdot\eta\, =\, m_\nu\cdot(\eta-q\theta)+q\, m_\nu\cdot\theta,
$$
from which we obtain the inequalities
\begin{equation}\label{A}
\gamma<\|m_\nu\cdot\eta\|\leq n\max_{1\leq i\leq n}(|m_{\nu,i}|\cdot\|\eta_i-q\theta_i\|)+q\zeta_{\nu}.
\end{equation}
Since $\zeta_\nu$ is strictly decreasing and $\zeta_\nu\rightarrow 0$ as $\nu\rightarrow\infty$, there exists $\nu\geq 1$ such that
\begin{equation}\label{B}
\dfrac{\gamma}{2\zeta_\nu}\leq q\leq \dfrac{\gamma}{2\zeta_{\nu+1}}.
\end{equation}
On the one hand, from the inequalities \eqref{A} and the upper bound in \eqref{B}, we deduce that
\begin{equation}\label{E}
\max_{1\leq i\leq n}(\|\eta_i-q\theta_i\|\cdot|m_{\nu+1,i}|)>\dfrac{\gamma}{2n}.
\end{equation}
On the other hand, from the lower bound in \eqref{B} and the inequality \eqref{Minko}, it follows that                                   
$$
q\geq\dfrac{\gamma}{2} M_{\nu+1}.
$$
We deduce that
\begin{equation}\label{D}
q^{j_i}\geq c|m_{\nu+1,i}|\qquad \forall i=1,\ldots, n, 
\end{equation}
where 
$$
c=\min_{1\leq i\leq n}\Big(\Big(\dfrac{\gamma}{2}\Big)^{j_i}\Big).
$$
Finally, by combining \eqref{E} and \eqref{D}, we have that
$$
\max_{1\leq i\leq n}(\|\eta_i-q\theta_i\|q^{j_i})>\dfrac{\gamma c}{2n}.
$$
This concludes the proof of the proposition.
\end{proof}

We define the set
$$
\V:=\left\{\eta\in\R^n\, :\, \inf_{\nu\geq 1}\|m_\nu\cdot\eta\|>0\right\}.
$$
Clearly 
\begin{equation}\label{inclusion primera}
\V\subset\Tad.
\end{equation}

\section{The Cantor-type set $K(R)$}

In this section we construct the Cantor-type set $K(R)$ inside\\ $\Tad\cap\mathrm{Bad}(1,0,\ldots,0)
\cap\ldots\cap\mathrm{Bad}(0,\ldots,0,1)$. In order to lighten the notation, throughout this section we denote by $\M$ the set of best approximations in the sequence \eqref{sequence}, and for each $m\in\M$, by $M_m$ the quantity defined by \eqref{M}, i.e.
$$
M_m=\max_{1\leq i\leq n}(|m_{i}|^{1/j_i}).
$$
Hence 
$$
\V=\left\{\eta\in\R^n\, :\, \inf_{m\in\M}\|m\cdot\eta\|>0\right\}.
$$
We define the following partition of $\M$: 
\begin{equation}\label{Mk}
\Mk:=\left\{m\in\M : R^{k-1}\leq M_m< R^k\right\}\qquad (k\geq 0).
\end{equation}
Note that $\M_0=\emptyset$.
We have that $\M=\bigcup^\infty_{k=0}\Mk$.
\\

We also need, for each $1\leq i \leq n$, the following partitions of $\N$:
\begin{equation}\label{Qk}
\Qk^{(i)}:=\left\{q\in\N : R^{(k-1)j_i/2}\leq q< R^{kj_i/2}\right\}\qquad (k\geq 0).
\end{equation}
Note that $\mathcal{Q}_0^{(i)}=\emptyset$ and for each $1\leq i\leq n$, we have that $\N=\bigcup^\infty_{k=0}\Qk^{(i)}$.
\\

At the heart of the construction of $K(R)$ is constructing a collection $\F_k$ of hyperrectangles $H_{k}$ inside the hypercube $[0,1]^n$ that satisfy the following $n$ conditions: 
\begin{itemize}
\item[(0)] $|m\cdot\eta+p|\geq\epsilon\qquad \forall\eta\in H_k,\, \forall m\in\M_{k-1},\, \forall p\in\Z$;
\item[(1)] $q|q\eta_1-p|\geq\epsilon\qquad \forall\eta\in H_k,\, \forall q\in\mathcal{Q}_{k-1}^{(1)},\, \forall p\in\Z$;
\item[]
$\qquad\qquad\qquad\qquad\vdots$
\item[($n$)]  $q|q\eta_n-p|\geq\epsilon\qquad \forall\eta\in H_k,\, \forall q\in\mathcal{Q}_{k-1}^{(n)},\, \forall p\in\Z$
\end{itemize}
for some $\epsilon>0$.

We start by constructing a collection $(\G_k^{(0)})_{k\geq 0}$ of hyperrectangles satisfying condition (0). This construction is done by induction. Then we define a subcollection $\G^{(1)}_k\subset\G^{(0)}_k$ of hyperrectangles that also satisfy condition (1), a subcollection $\G^{(2)}_k\subset\G^{(1)}_k$ that also satisfies condition (2), etc. This process ends with a subcollection $\G^{(n)}_k$ that satisfies the $n$ conditions above. 
We would like to quantify $\#\G^{(n)}_k$. We can give a lower bound, but we cannot quantify the exact cardinal. So we refine the collection $\G_k^{(n)}$ by choosing a right and final subcollection $\F_k$ that we can quantify. 
\\

Let
$$
j_{\min}=\min_{1\leq i\leq n}(j_i),\qquad j_{\max}=\max_{1\leq i\leq n}(j_i).
$$
Let $R>4^{1/j_{\min}}$ and $\epsilon>0$ be such that
\begin{equation}\label{epsilon}
\epsilon<\dfrac{1}{2R^{2j_{\max}}}.
\end{equation}
The parameter $R$ will be chosen later to be sufficiently large in order to satisfy various conditions.

\subsection{The collection $\G^{(0)}_k$}

For each $m\in\M$ and $p\in\Z$, let
$$
\Delta(m,p):=\left\{x\in \R^n:\, |m\cdot x+p|<\epsilon\right\}.
$$
Geometrically, $\Delta(m,p)$ is the thickening of a hyperplane of the form
\begin{equation}\label{hyper}
\mL(m,p):=\left\{x\in\R^n:\, m\cdot x+p=0\right\}
\end{equation}
with width $2\epsilon/m_i$ in all the $x_i$-coordinate directions. 
\\

Next we describe the induction procedure in order to define the collection $(\G^{(0)}_k)_{k\geq 0}$.
We work within the closed hypercube $H_0=[0,1]^n$ and set $\G^{(0)}_0=\left\{H_0\right\}$. For $k\geq 0$, we divide each $H_{k}\in\G^{(0)}_{k}$ into new hyperrectangles $H_{k+1}$ of size 
$$
R^{-(k+1)j_1}\times\ldots\times R^{-(k+1)j_n}.
$$
Note that if $R^{j_i}\not\in\Z$ for some $1\leq i\leq n$, the division will not be exact, in the sense that the new hyperrectangles will not cover $H_k$. This division gives at least $\prod_{i=1}^n[R^{j_i}]> R-\sum_{i=1}^n R^{j_i}$ new hyperrectangles. Among these new hyperrectangles, we denote by $\G^{(0)}(H_{k})$ the collection of hyperrectangles $H_{k+1}\subset H_{k}$ satisfying
$$
H_{k+1}\cap\Delta(m,p)=\emptyset\qquad \forall m\in\Mk,\, \forall p\in\Z.
$$

We define 
$$
\G^{(0)}_{k+1}:=\bigcup_{H_{k}\in\G^{(0)}_{k}}\G^{(0)}(H_{k}).
$$
Hence $\G^{(0)}_{k+1}$ is nested in $\G^{(0)}_{k}$ and it is a collection of `good' hyperrectangles with respect to all the best approximations $m$ satisfying $M_m< R^k$ and all the integers $p$. The collection $\G^{(0)}(H_k)$ is the collection of `good' hyperrectangles that we obtain from the division of $H_k$.

Next we give a lower bound for $\#\G^{(0)}_k$. Actually,  for a fixed hyperrectangle $H_k\in\G^{(0)}_k$, we give a lower bound for the number of hyperrectangles $H_{k+1}\in \G^{(0)}(H_k)$. Alternatively, we give an upper bound for the number of `bad' hyperrectangles in $H_k$; these are the hyperrectangles $H_{k+1}\subset H_k$ that intersect the thickening $\Delta(m,p)$ of some hyperplane $\mL(m,p)$ with $m\in\Mk$.
Fact 1 and Fact 2 bound the number of thickenings $\Delta(m,p)$ with $m\in\M_k$ and $p\in\Z$ that intersect $H_k$.  Fact 3 bounds the number of hyperrectangles $H_{k+1}\subset H_k$ that are intersected by a thickening $\Delta(m,p)$ with $m\in\Mk$ and $p\in\Z$.
\\

\textbf{Fact 1.} We show that for each $k\geq 1$, the set $\Mk$ contains at most $2\cdot3^n(1+\log_2(R))$ best approximations.
Indeed, lemma \ref{lema lacunary} implies that 
\begin{align*}
M_{\nu+2\cdot3^n(1+\log_2(R))}&\geq 2^{1+\log_2(R)}M_\nu\\
&\stackrel{\eqref{Mk}}{\geq} 2^{1+\log_2(R)}R^{k-1}\\
&> R^k.
\end{align*}
Therefore, there are at most $2\cdot3^n(1+\log_2(R))$ best approximations in $\M_k$. 
\\

\textbf{Fact 2.} Fix $m\in\Mk$. We show that there are at most $2^{n}n$ thickenings $\Delta(m,p)$ that intersect $H_k$.
Indeed, suppose that two different thickenings $\Delta(m,p)$ and $\Delta(m,p')$ intersect the same edge of $H_k$. This edge of $H_k$ is a segment of a line which is parallel to an $x_l$-axis.
 Let $P=(y_1,\ldots,y_n)$ and $P'=(y_1',\ldots,y_n')$ denote the points of intersection of this line parallel to the $x_l$-axis with $\mL(m,p)$ and $\mL(m,p')$ respectively. 
The fact that $P$ and $P'$ respectively belong to $\mL(m,p)$ and $\mL(m,p')$ is described by the equations
\begin{equation}\label{P}
m\cdot y+p=0,\qquad
m\cdot y'+p'=0.
\end{equation}
The fact that $P$ and $P'$ both belong to a line parallel to the $x_l$-axis implies that $y_i=y_i'$ $\forall i\neq l$. Hence, by substracting the second equation in \eqref{P} to the first one, we have that
\begin{equation}\label{edge}
|y_l-y_l'|-\dfrac{2\epsilon}{|m_l|}\geq \dfrac{|p-p'|}{|m_l|}-\dfrac{2\epsilon}{|m_l|}> \dfrac{1}{R^{kj_l}}-\dfrac{1}{2R^{kj_l}}=\dfrac{1}{2}R^{-kj_l}.
\end{equation}
Since the length size of $H_k$ in the $x_l$-direction is $R^{-kj_l}$, the inequality \eqref{edge} implies that there are not more than two thickenings intersecting the same edge of $H_k$.
Thus the number of thickenings $\Delta(m,p)$ that intersect $H_k$ is at most twice the number of edges of $H_k$, and this is $2^{n}n$. 
\\

\textbf{Fact 3.} Given a thickening $\Delta(m,p)$, we give an upper bound for the number of hyperrectangles $H_{k+1}\subset H_k$ that intersect $\Delta(m,p)$. Fix $m\in\Mk$ and $p\in\Z$. Denote by $l$ the index such that $M_m=|m_l|^{1/j_l}$. 
Consider the projection of $\Delta(m,p)\cap H_k$ onto one of the faces of $H_k$ parallel to the plane given by the $x_l$-axis and an $x_i$-axis. We split this projected of $\Delta(m,p)\cap H_k$ into right triangles with perpendicular sides of length $2\epsilon/|m_l|$ and $2\epsilon/|m_i|$ respectively.
 From this splitting and the inequality
$$
\dfrac{2\epsilon}{|m_l|}<\dfrac{1}{2R^{j_l(k+1)}},
$$
we deduce that $\Delta(m,p)$ intersects at most $2[R^{1-j_{\min}}]$ hyperrectangles $H_{k+1}\subset H_k$.
\\
 
\textbf{Conclusion.} There are at most $[2^{n+2}3^n n(1+\log_2(R))R^{1-j_{\min}}]$ hyperrectangles $H_{k+1}\subset H_k$ that intersect some $\Delta(m,p)$ with $m\in\M_k$, $p\in\Z$. Hence 
$$
\#\G^{(0)}(H_k)\geq R-\sum_{i=1}^n R^{j_i}-[2^{n+2} 3^n n(1+\log_2(R))R^{1-j_{\min}}].
$$

\subsection{The subcollections $\G_k^{(i)}$}

For each $q\in\N$ and $p\in\Z$, consider the sets
\begin{equation}\label{thicke 2}
\Gamma_i(q,p):=\left\{x\in \R^n:\, q|qx_i-p|<\epsilon\right\}\qquad (1\leq i\leq n).
\end{equation}
Geometrically, each $\Gamma_i(q,p)$ is a thickening of a hyperplane described by the equation $x_i=p/q$ with width $2\epsilon/q^2$ in the $x_i$-coordinate direction.
\\

We construct a tower of subcollections 
$$
\G_k^{(n)}\subset\G_k^{(n-1)}\subset \ldots\subset\G_k^{(1)}\subset\G_k^{(0)},
$$
where each $\G_k^{(i)}$ consists of hyperrectangles in $\G_k^{(i-1)}$ which points avoid each thickening $\Gamma_i(q,p)$ for $q\in\mathcal{Q}_{k-1}^{(i)}$.
More precisely,
for $1\leq i\leq n$, we form $\G_k^{(i)}$ by letting
$$
\G^{(i)}(H_k):=\left\{H_{k+1}\in\G^{(i-1)}(H_k):\, H_{k+1}\cap\Gamma_i(q,p)=\emptyset\,\, \forall q\in\Qk^{(i)}\right\}
$$
and 
$$
\G^{(i)}_{k+1}:=\bigcup_{H_k\in\G_k^{(i-1)}} \G^{(i)}(H_k).
$$
Clearly the hyperrectangles in $\G^{(i)}_{k+1}$ satisfy the conditions (0),(1),...,($i$), so the collection $\G^{(n)}_{k}$ satisfies the $n$ conditions (0),...,($n$). 
\\

Next, for each $1\leq i\leq n$ and $H_k\in\G_k^{(i-1)}$, we give a lower bound of $\#\G^{(i)}(H_k)$.
Suppose that there are two pairs $(q,p)$ and $(q',p')$ in $\Qk^{(i)}\times\Z$ such that 
$$
H_{k}\cap\Gamma_i(q,p)\neq\emptyset,\qquad H_{k}\cap\Gamma_i(q',p')\neq\emptyset.
$$
In other words, suppose there exist $\eta,\eta'$ in $H_k$ such that
\begin{equation}\label{ineq eta}
q|q\eta_i-p|<\epsilon,\qquad q'|q'\eta'_i-p'|<\epsilon.
\end{equation}
Then, by \eqref{Qk} and \eqref{epsilon}, we have 
\begin{equation}\label{pq}
\left|\dfrac{p}{q}-\dfrac{p'}{q'}\right|-\dfrac{\epsilon}{q^2}-\dfrac{\epsilon}{q'^2}\geq \dfrac{1}{qq'}-\dfrac{\epsilon}{q^2}-\dfrac{\epsilon}{q'^2}> \dfrac{1}{R^{kj_i}}-\dfrac{1}{2R^{kj_i}}=\dfrac{1}{2}R^{-kj_i}.
\end{equation}
Since the length sides of $H_k$ in the $x_i$-direction is $R^{-kj_i}$, the inequality \eqref{pq} implies that at most two thickenings of the form \eqref{thicke 2} can intersect $H_k$.
\\

Now, from  \eqref{Qk} and \eqref{epsilon}, it follows that if $\eta\in\Gamma_i(q,p)$, then
$$
\left|\eta_i-\dfrac{p}{q}\right|<\dfrac{\epsilon} {q^2}< \dfrac{1}{2}R^{-kj_i},
$$
which implies that each thickening $\Gamma_i(q,p)$ intersects at most 
$$
2[R^{j_1}]\times\ldots\times\widehat{[R^{j_i}]}\times\ldots\times[R^{j_n}]\leq 2 [R^{1-j_i}]
$$
hyperrectangles $H_{k+1}\subset H_k$.
\\

Therefore, there are at most $4 [R^{1-j_{\min}}]$ hyperrectangles  $H_{k+1}\subset H_k$ that do not satisfy condition (i). Hence
\begin{equation}\label{cardinal segundo}
\#\G^{(i)}(H_k)\geq R-\sum_{i=1}^n R^{j_i}-[2^{n+2} 3^n n(1+\log_2(R))R^{1-j_{\min}}]-4i[R^{1-j_{\min}}].
\end{equation}

\subsection{The right subcollection $\F_k$}

We choose a subcollection of $\G^{(n)}_k$ that we can exactly quantify in the following way. Let $\F_0:=\G^{(0)}_0$. Choose $R$ sufficiently large so that $[R-\sum_{i=1}^n R^{j_i}-2^{n+2}3^n n(1+\log_2(R))\cdot R^{1-j_{\min}}-4nR^{1-j_{\min}}]> 1$.
 For $k\geq 0$,
for each $H_k\in\F_k$, we choose exactly $[R-\sum_{i=1}^n R^{j_i}-2^{n+2}3^n n(1+\log_2(R))R^{1-j_{\min}}-4nR^{1-j_{\min}}]$ hyperrectangles from the collection $\G^{(n)}(H_k)$ and denote this collection by $\F(H_k)$. Trivially, 
\begin{equation}\label{cardinal exacto}
\#\F(H_k)=[R-\sum_{i=1}^n R^{j_i}-2^{n+2} 3^n n(1+\log_2(R))R^{1-j_{\min}}-4nR^{1-j_{\min}}]>1,
\end{equation}
so each hyperrectangle $H_k\in\F_k$ gives rise to exactly the same number of hyperrectangles $H_{k+1}$ in $\F(H_k)$. 
Finally, define
$$
\F_{k+1}:=\bigcup_{H_k\in\F_k}\F(H_k).
$$
This completes the construction of the Cantor-type set
$$
K(R):=\bigcap_{k=0}^\infty\F_k.
$$
By construction, we have $K(R)\subset\V\cap\mathrm{Bad}(1,0\ldots,0)\cap\ldots\cap\mathrm{Bad}(0,\ldots,0,1)$. Moreover, in view of \eqref{cardinal exacto}, we have
\begin{align}\label{cardinal teorico}
\#\F_{k+1}&=\#\F_{k}\, \#\F(H_k)\\
&=[R-\sum_{i=1}^n R^{j_i}-2^{n+2} 3^n n(1+\log_2(R))R^{1-j_{\min}}-4nR^{1-j_{\min}}]^{k+1}.
\end{align}

\section{The measure $\mu$ on $K(R)$}

We now describe a probablity measure $\mu$ supported on the Cantor-type set $K(R)$ constructed in the previous section. The measure we define is analogous to the probability measure used in \cite{PV} and \cite{BPV} on a Cantor-type set of $\R^2$. For any hyperrectangle $H_k\in\F_k$ we attach a weight $\mu(H_k)$ which is defined recursively as follows: for $k=0$,
$$
\mu(H_0)=\dfrac{1}{\#\F_0}=1
$$
and for $k\geq 1$,
$$
\mu(H_k)=\dfrac{1}{\#\F(H_{k-1})}\mu(H_{k-1})\qquad (H_k\in\F(H_{k-1})).
$$
This procedure defines inductively a mass on any hyperrectangle used in the construction of $K(R)$. Moreover, $\mu$ can be further extended to all Borel subsets $X$ of $\R^n$, so that $\mu$ actually defines a measure supported on $K(R)$, by letting
$$
\mu(X)=\inf\sum_{H\in\mathcal{C}}\mu(H)
$$
where the infimum is taken over all coverings $\mathcal{C}$ of $X$ by rectangles $H\in\left\{\F_k\, :\, k\geq 0\right\}$. For further details, see \cite{F}, Proposition 1.7.

Notice that, in view of \eqref{cardinal teorico}, we have
$$
\mu(H_k)=\dfrac{1}{\#\F_k}\qquad (k\geq 0).
$$

A classical method for obtaining a lower bound for the Hausdorff dimension of an arbitrary set is the following mass distribution principle (see \cite{F} p. 55).

\begin{lemma}[mass distribution principle]
Let $\delta$ be a probability measure supported on a subset $X$ of $\R^n$. Suppose there are positive constants $c,s$ and $l_0$ such that
\begin{equation}\label{mdp}
\delta(S)\leq cl^s
\end{equation}
for any hypercube $S\subset\R^n$ with side length $l\leq l_0$. Then $\dim(X)\geq s$.
\end{lemma}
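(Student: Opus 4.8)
The plan is to derive the bound $\dim(X)\geq s$ from a positive lower bound on the $s$-dimensional Hausdorff measure $\mathcal{H}^s(X)$; the hypothesis \eqref{mdp} is precisely what is needed to bound the $\delta$-mass of the members of an arbitrary economical cover of $X$ from below in terms of their diameters.

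First I would record the elementary geometric observation that any set $U\subseteq\R^n$ of diameter $d$ lies in a closed hypercube of side length $d$: the orthogonal projection of $U$ onto each coordinate axis has diameter at most $d$, hence is contained in an interval of length $d$, and the product of these $n$ intervals is such a cube. Consequently, if $\{U_i\}_{i\geq 1}$ is any countable cover of $X$ by sets with $\operatorname{diam}(U_i)\leq l_0$, then for each $i$ we may choose a hypercube $S_i\supseteq U_i$ with side length $l_i:=\operatorname{diam}(U_i)\leq l_0$, and \eqref{mdp} yields $\delta(S_i)\leq c\,l_i^{\,s}=c\,(\operatorname{diam} U_i)^s$.

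Next I would exploit that $\delta$ is a probability measure supported on $X$, so that $\delta(X)=1$. Since $X\subseteq\bigcup_{i}U_i\subseteq\bigcup_i S_i$, countable subadditivity of $\delta$ gives
$$
1=\delta(X)\leq\sum_{i\geq 1}\delta(S_i)\leq c\sum_{i\geq 1}(\operatorname{diam} U_i)^s ,
$$
hence $\sum_i(\operatorname{diam} U_i)^s\geq 1/c$ for every such cover. Taking the infimum over all covers of $X$ by sets of diameter at most $l_0$ shows that the Hausdorff premeasure at scale $l_0$ satisfies $\mathcal{H}^s_{l_0}(X)\geq 1/c$, and since $\mathcal{H}^s(X)=\sup_{\rho>0}\mathcal{H}^s_{\rho}(X)\geq\mathcal{H}^s_{l_0}(X)$ we obtain $\mathcal{H}^s(X)\geq 1/c>0$. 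By the definition of the Hausdorff dimension as $\inf\{t\geq 0:\mathcal{H}^t(X)=0\}$, the positivity of $\mathcal{H}^s(X)$ forces $\dim(X)\geq s$, which is the claim.

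I do not expect a serious obstacle; the only point requiring a little care is a measurability matter, namely that the sets $U_i$ in a general cover need not be $\delta$-measurable, so $\delta(U_i)$ is not immediately meaningful. Passing to the Borel cubes $S_i$ removes this difficulty and, crucially, does so without changing the relevant quantity, since $S_i$ can be taken to have side length exactly $\operatorname{diam}(U_i)$; the remainder is the routine chain of geometric enclosure, the mass hypothesis, countable subadditivity, and unwinding the definitions of $\mathcal{H}^s$ and of $\dim$.
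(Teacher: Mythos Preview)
Your argument is correct and is the standard proof of the mass distribution principle. The paper itself does not supply a proof of this lemma; it merely states it and refers the reader to Falconer~\cite{F}, p.~55, where essentially the same argument as yours appears (cover, enclose each covering set in a cube of side equal to its diameter, apply the mass bound, use subadditivity, and conclude $\mathcal{H}^s(X)>0$). So there is nothing to compare: you have filled in what the paper leaves as a citation, and done so in the expected way.
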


The goal in the next section is to prove that there exist constants $c$ and $l_0$ satisfying \eqref{mdp} with $\delta=\mu$, $X=K(R)$ and $s=n-\lambda(R)$, where $\lambda(R)\rightarrow 0$ as $R\rightarrow\infty$. Then from the mass distribution principle it will follow that $\dim(K(R))=n$.

\section{A lower bound for $\dim(K(R))$}

Recall that 
$$
j_{\min}=\min_{1\leq i\leq n}(j_i).
$$
Let $k_0$ be a positive integer such that 
\begin{equation}\label{k0}
R^{-kj_i}< R^{-(k+1)j_{\min}}\qquad\forall j_i\neq j_{\min}\mbox{ and } k\geq k_0.
\end{equation}

Consider an arbitrary hypercube $S$ of side length $l\leq l_0$ where $l_0$ satisfies
\begin{equation}\label{l0}
l_0<R^{-(k_0+1)j_{\min}}
\end{equation}
together with a second inequality to be determined later. 
We can choose $k > k_0$ so that
\begin{equation}\label{l}
R^{-(k+1)j_{\min}}< l < R^{-kj_{\min}}. 
\end{equation}
From the inequality \eqref{k0} it follows that
\begin{equation}\label{lcounting}
l>R^{-kj_i}\qquad\forall j_i\neq j_{\min}.
\end{equation}
Then it is easy to see that $S$ intersects at most $2^nl^{n-1}\prod_{j_i\neq j_{\min}} R^{kj_i}$ hyperrectangles $H_k\in\F_k$, so
$$
\mu(S)\leq 2^nl^{n-1}\prod_{j_i\neq j_{\min}} R^{kj_i}\mu(H_k)=2^nl^{n-1} R^{k-kj_{\min}}\dfrac{1}{\#\F_k}.
$$
Since $R^{(k+1)j_{\min}}> l^{-1}$ (see \eqref{l}), we have that
$$
\mu(S) \leq 2^n l^n R^{j_{\min}}R^k\dfrac{1}{\#\F_k}.
$$
Remember that we mentioned in Section 3 that later we would choose the parameter $R$ big enough so that it satisfies various conditions. We choose $R$ so that 
$$
R^{-1}\sum_{i=1}^n R^{j_i}-2^{n+2} 3^n n (1+\log_2(R))R^{-j_{\min}}-4nR^{-j_{\min}}-R^{-1}\leq 2^{-1}.
$$
Then, by \eqref{cardinal teorico} we have that
$$
\mu(S)\leq 2^n l^n R^{j_{\min}} 2^k.
$$
We choose 
$$
k\geq\log(R)\quad\mbox{and}\quad
\lambda(R)=\dfrac{1+\log(2)}{j_{\min}\log(R)},
$$
so
$$
\mu(S)\leq 2^n l^n R^{kj_{\min}\lambda(R)}.
$$
Since $R^{kj_{\min}} < l^{-1}$ (see \eqref{l}), it follows that
$$
\mu(S)\leq 2^n l^{n-\lambda(R)}.
$$ 
Finally, by applying the mass distribution principle we obtain
$$
\dim K(R)\geq n-\lambda(R)\rightarrow n\qquad \mbox{as } R\rightarrow\infty.
$$

\end{document}